\pdfoutput=1
\documentclass[11pt]{article}
\usepackage[letterpaper,margin=1in]{geometry}
\usepackage{lmodern}
\usepackage{microtype}
\usepackage[T1]{fontenc}
\usepackage{amsmath,amssymb,amsthm,mathtools,bm}
\usepackage{booktabs,array,graphicx}
\usepackage{tikz}
\usetikzlibrary{arrows.meta,positioning,calc}
\usepackage{cite}
\usepackage[hidelinks]{hyperref}
\hypersetup{pdftitle={The Complexity of Minimal Positive Realization},pdfauthor={Zhaobo Liu},pdfsubject={Minimal positive realization}}
\usepackage{url}
\newtheorem{theorem}{Theorem}
\newtheorem{lemma}[theorem]{Lemma}

\newtheorem{corollary}[theorem]{Corollary}
\theoremstyle{definition}
\newtheorem{definition}{Definition}
\theoremstyle{remark}

\newcommand{\R}{\mathbb R}
\newcommand{\Q}{\mathbb Q}

\newcommand{\ER}{\exists\mathbb R}
\newcommand{\dplus}{d_+}
\newcommand{\mcm}{d_{\mathrm{McM}}}
\newcommand{\PR}{\textnormal{\textsc{PosRealOrder}}}
\newcommand{\Inv}{\textnormal{\textsc{ETR-INV}}}
\newcommand{\diag}{\operatorname{diag}}

\newcommand{\e}{\mathbf e}

\newcommand{\coeffge}{\succeq_{\mathrm{coef}}}

\title{The Complexity of Minimal Positive Realization}
\author{Zhaobo~Liu%
\thanks{This work was supported in part by the National Natural Science Foundation of China under Grant 12401585, the Guangdong Basic and Applied Basic Research Foundation under Grant 2024A1515011542, and the General Program of Shenzhen Natural Science Foundation under Grant JCYJ20250604181037012.}
\thanks{Zhaobo Liu is with the Institute for Advanced Study, Shenzhen University, 3883 Baishi Road, Nanshan District, Shenzhen 518060, China (e-mail: liuzhaobo@szu.edu.cn).}
}
\date{}
\begin{document}
\maketitle
\begin{abstract}
We study the complexity of minimal positive realization for discrete-time systems. We prove that, given a strictly proper transfer matrix with rational coefficients and an order bound encoded in unary, deciding whether a positive realization exists within that bound is complete for the existential theory of the reals. We further prove hardness for stable single-input single-output transfer functions that admit positive realizations, even when the bound equals the McMillan degree.
\end{abstract}
\noindent\textbf{Keywords:}
Positive systems, minimal positive realization, computational complexity, existential theory of the reals.

\section{Introduction}\label{sec:intro}
Positive linear systems model processes involving nonnegative quantities, such as populations, chemical concentrations, and material inventories \cite{FarinaRinaldi2000,BenvenutiFarina2004}. Their states and outputs remain nonnegative under nonnegative initial conditions and inputs. For discrete-time linear systems, the positive realization problem is to find a state-space model with nonnegative system matrices that realizes a given transfer matrix exactly. The minimal positive realization problem seeks the smallest number of states among all such models, preserving both the prescribed input--output behavior and nonnegativity. This problem has been studied for several decades \cite{BenvenutiFarina1999,Benvenuti2022}. Nonnegativity can force the minimum positive order above the McMillan degree, the minimum number of states in an ordinary realization. We study the computational difficulty of determining how many states are required.

Determining the minimum positive order requires information about both existence and the number of states needed. Invariant polyhedral cones characterize whether a positive realization exists \cite{VanDenHof1997}. If a positive realization exists, lower bounds exclude smaller orders \cite{Hadjicostis1999}, while constructive algorithms under assumptions on the dominant pole supply upper bounds \cite{CzajaJamingMatolcsi2008}. Matching a lower bound with a construction proves minimality. Such bounds can be obtained when a nonnegative impulse response has a zero sample and is strictly positive at all later times \cite{NagyMatolcsi2003Lowerbound}. For particular classes, the McMillan degree itself can be tested as a candidate minimum. Constructions identify cases where this degree is attained for specific pole and residue configurations \cite{HalmschlagerMatolcsi2005}, and necessary and sufficient conditions are available for third-order transfer functions with distinct real poles \cite{Benvenuti2013RealPoles}. These results determine the minimum order when a construction attains a lower bound or a criterion confirms that the McMillan degree is attainable.

Smaller positive models can also be sought by reducing a given model or by restricting the realization form. For exact reduction of a positive model, recent work gives conditions for using nonnegative reduction matrices \cite{Cortese2025}. For single-input single-output transfer functions, feasibility of a positive realization in Markov form at a prescribed order can be determined by linear programming \cite{TaghavianSjolund2025}. We ask how difficult it is to decide whether a given transfer matrix admits a positive realization within a prescribed order bound.

Related algebraic feasibility problems have known complexity classifications. Spectral conditions connect positive realization to variants of the nonnegative inverse eigenvalue problem \cite{Benvenuti2020NIEP}, whose real variant is NP-hard \cite{BorobiaCanogar2017}. Exact nonnegative matrix factorization is NP-hard \cite{Vavasis2009} and, by Shitov's result, complete for the existential theory of the reals \cite[Theorem~2]{Shitov2018}. We use real polynomial feasibility to classify the positive realization order problem.

We prove that, given a strictly proper transfer matrix with rational coefficients and an order bound encoded in unary, deciding whether a positive realization exists within that bound is complete for the existential theory of the reals. The problem is therefore polynomial-time equivalent to deciding whether finite systems of polynomial equalities and inequalities with rational coefficients have real solutions. Hardness already holds for stable single-input single-output (SISO) transfer functions when the bound equals the McMillan degree $m$. Every transfer function produced by the reduction has a positive realization, whether or not it has one with $m$ states. Thus, even when positive realizability is guaranteed, deciding whether nonnegativity requires additional states beyond an ordinary minimal realization is computationally hard.

For the constructed transfer functions, initial zero impulse-response samples and nonnegativity force every positive realization of order $m$ to contain a chain from input to output through all $m$ states. This chain structure allows us to express the existence of an $m$-state positive realization through polynomial coefficient inequalities for the denominators used in the construction. We then encode bounded real equations \cite[Definition~5 and Theorem~7]{Abrahamsen2018} in these inequalities so that the equations have a real solution exactly when the constructed transfer function admits an $m$-state positive realization. This equivalence transfers the computational hardness of the bounded equations to positive realization at order $m$.

Section~\ref{sec:problem} states the problem and main results. Sections~\ref{sec:structure} and~\ref{sec:construction} develop the structural criteria and the transfer functions for the reduction. Section~\ref{sec:bits} proves Theorem~\ref{thm:main}. Appendices~\ref{app:structure} and~\ref{app:encoding} contain the structural proofs and the encoding of bounded equations.

\noindent\textit{Notation:} The symbols $\R$ and $\Q$ denote the sets of real and rational numbers, respectively. All matrix and vector inequalities below are entrywise. We write $\e_i$ for the $i$th standard basis vector and $\R_+^m$ for the nonnegative orthant in $\R^m$. For a polynomial $f(t)$, $[t^k]f$ denotes the coefficient of $t^k$. For real polynomials $f$ and $g$, $f\coeffge g$ means that every coefficient of $f-g$ is nonnegative.

\section{Problem Formulation and Main Results}\label{sec:problem}
We define the minimum positive order and its associated decision problem.

\begin{definition}\label{def:positive}
Let $H$ be a nonzero strictly proper real rational transfer matrix with $p$ inputs and $q$ outputs. For a positive integer $n$, a positive realization of order $n$ is a triple $(A,B,C)$ with $A\in\R^{n\times n}$, $B\in\R^{n\times p}$, and $C\in\R^{q\times n}$ satisfying
\begin{equation}\label{eq:realization}
 H(z)=C(zI-A)^{-1}B,\qquad A,B,C\ge0.
\end{equation}
The minimum positive order is
\begin{equation}\label{eq:order}
 \dplus(H)=\min\{n:H\text{ has a positive }n\text{-state realization}\},
\end{equation}
with value $+\infty$ if no finite positive realization exists.
\end{definition}

The Markov parameters are defined by
\begin{equation}\label{eq:markov}
 H(z)=\sum_{k\ge0}H_kz^{-(k+1)},\qquad H_k=CA^kB.
\end{equation}
In the SISO case, we write a realization as $(A,b,c)$ with $B=b$, $C=c^\top$, and $b,c\in\R^n$, and denote the scalar Markov parameters by $h_k=c^\top A^kb$.

\begin{definition}\label{def:mcm}
The McMillan degree $\mcm(H)$ is the minimum number of states among all real state-space realizations of $H$ \cite[Sec.~II]{BenvenutiFarina2004}.
\end{definition}

Write $m=\mcm(H)$. Every positive realization is also a real state-space realization, so $\dplus(H)\ge m$. For a reduced SISO transfer function $H=N/D$ with monic denominator $D$, $m=\deg D$.

\subsection{Order decision problem}
Given a transfer matrix $H$ and an order bound $n$, we ask whether $H$ has a positive realization with at most $n$ states. This is the decision version of the minimal positive realization problem \cite{Benvenuti2022}. Computing the minimum positive order would solve this decision problem for every $n$.

\begin{definition}\label{def:orderdecision}
Let $H$ be a nonzero strictly proper transfer matrix with rational coefficients and let $n$ be a positive integer. The positive realization order problem, denoted by $\PR$, is to determine whether
\begin{equation}\label{eq:language}
 \dplus(H)\le n.
\end{equation}
\end{definition}

For the complexity analysis, we use rational data to obtain a finite exact representation. Each rational number is encoded by its integer numerator and denominator in binary. Each entry of $H$ is specified by the full coefficient lists of its numerator and denominator polynomials. The order bound $n$ is encoded in unary as $1^n$, a string of $n$ ones. The input size is the total number of bits in these data.

Real polynomial feasibility asks whether a finite system of polynomial equalities and inequalities with rational coefficients has a real solution. A polynomial-time many-one reduction transforms each instance of one decision problem into an instance of another in polynomial time, preserving the answer. The class $\ER$ consists of decision problems that admit such a reduction to real polynomial feasibility. A problem is $\ER$-hard if every problem in $\ER$ reduces to it, and $\ER$-complete if it is both $\ER$-hard and in $\ER$ \cite[Sec.~1.1]{SchaeferCardinalMiltzow2024}.

\subsection{Main results}

\begin{theorem}\label{thm:main}
The decision problem $\PR$ is $\ER$-complete. Hardness already holds for stable SISO transfer functions that admit a positive realization, with order bound $n=\mcm(H)$.
\end{theorem}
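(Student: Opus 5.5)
The proof splits into membership in $\ER$ and $\ER$-hardness for the restricted SISO class.

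\emph{Membership.} Given $H$ and $1^n$, write polynomially many real variables for the entries of $A\in\R^{n\times n}$, $B\in\R^{n\times p}$, $C\in\R^{q\times n}$, impose $A,B,C\ge0$, and encode $H(z)=C(zI-A)^{-1}B$ by polynomial equalities. Since $n$ is unary, the dimensions are polynomial in the input size. To avoid inverses, let $D(z)$ be a common denominator of $H$, of degree $r$. The equality of rational functions holds iff the Markov parameters agree, $CA^kB=H_k$ for $0\le k\le n+r$ (or any bound exceeding $n+\mcm(H)$). This works because both sides then satisfy a linear recurrence of order at most $n+r$: the characteristic polynomial of $A$ times $D$ annihilates the difference sequence. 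The $H_k$ are rationals of polynomial bit size, computable from the coefficient lists by the recurrence given by $D$. The powers $A^k$ are introduced with auxiliary matrix variables $P_0=I$, $P_{k+1}=AP_k$, so each constraint has degree at most $3$. This yields a polynomial-size instance of real polynomial feasibility, so $\PR\in\ER$.

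\emph{Hardness.} I would reduce from the bounded-equations problem of \cite[Definition~5 and Theorem~7]{Abrahamsen2018}. That problem takes a conjunction of constraints of the forms $x=1$, $x+y=z$, $xy=1$ with the variables confined to a bounded interval, and it is $\ER$-complete. The plan follows the outline in the introduction.

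First, prove a structural lemma. Suppose $H=N/D$ is SISO with $\deg D=m$ and Markov parameters $h_0=\dots=h_{m-2}=0<h_{m-1}$. Then in any positive realization of order $m$, nonnegativity forces the graph of $(A,b,c)$ to contain a path of length $m-1$ from the support of $b$ to the support of $c$. The path cannot be shorter, because then some $h_k$ with $k<m-1$ would be positive, and no cancellations can occur among nonnegative terms. The path must visit all $m$ states, so after a permutation $b=\beta\e_1$, $c=\gamma\e_m$, and $A$ has positive subdiagonal $a_{i+1,i}>0$. After diagonal scaling, $H=1/\det(zI-A)$ up to a constant, and the entries of $A$ above the subdiagonal are free nonnegative numbers.

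Second, characterize which monic $D$ arise as $\det(zI-A)$ for a nonnegative matrix with positive subdiagonal. Expanding the determinant, the coefficients of $D$ are signed sums over cycle covers of the "Hessenberg" nonnegative entries. I would aim for a criterion of the type: $D(z)$ is realizable iff there are nonnegative diagonal entries $\lambda_1,\dots,\lambda_m$ and a factorization in which $\prod_i(z-\lambda_i)-D(z)$ satisfies polynomial coefficient inequalities $\coeffge$ of a controlled form. A typical form would be block products of quadratic or linear pieces $(z-\lambda_i)$, with the free superdiagonal entries absorbing the coefficient slack.

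Third, design $D$ as a product of gadget factors, with variables of the bounded system encoded as real parameters such as pole locations or diagonal entries. The constraints $x+y=z$ and $xy=1$ should then correspond exactly to the equality cases of those coefficient inequalities, and a fixed "large" component guarantees that $D$ is stable and positively realizable at some higher order. That guarantee could come from a dominant positive real pole, invoking the construction of \cite{CzajaJamingMatolcsi2008}. Finally, check that all coefficients have polynomial bit size and that $h_0=\dots=h_{m-2}=0$ holds by construction.

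\emph{Main obstacle.} The hard step is the second–third combination. The coefficient inequalities coming from Hessenberg nonnegative matrices are one-sided, so I must arrange gadgets whose tight cases encode the equations exactly. I also need to rule out slack, i.e. realizations that use spurious nonzero superdiagonal entries to satisfy the inequalities without solving the system. My first attempt would be to reserve nonnegative parameters so that any slack forces a strict inequality in a coefficient pinned by $D$. I would also exploit the boundedness of the variables from \cite{Abrahamsen2018} to keep all candidate diagonal entries in a compact range, where the inequalities can be made tight.
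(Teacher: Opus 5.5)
Your membership argument is correct and is essentially the paper's Lemma~\ref{lem:membership}. You should add only that a realization with fewer than $n$ states can be padded with zero states. Your first structural step is essentially Lemma~\ref{lem:hess}. The hardness part, however, stops where the reduction begins. Your second and third steps state no criterion and no construction, and the obstacle you name at the end is the whole content of the proof.

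The criterion you guess, built from diagonal entries $\lambda_i$ and factors $z-\lambda_i$, has no basis. The polynomial $t^mp(1/t)$ is a signed sum over all families of vertex-disjoint intervals $[i,j]$ weighted by $J_{ij}$, and the diagonal entries are only the length-one intervals. Without controlling which families can be disjoint, the coefficients are high-degree polynomials in $\Theta(m^2)$ unknowns, and no equivalence with a prescribed system is in sight. Encoding unknowns in pole locations cannot work either. The roots of $D$ are fixed by the instance you output, so only the free entries of $J$ can represent existentially quantified variables.

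The missing idea is to choose the coefficient pattern so that this expansion collapses, as in Lemma~\ref{lem:compression}:
\begin{itemize}
\item $q_1=\dots=q_{g-1}=0$ forces every cycle shorter than $g$ to vanish, including every diagonal entry.
\item $q_k=-s_k$ fixes the total weight of the cycles of each length $k\in[g,2g)$.
\item $m=3g-1$ permits at most two disjoint nonzero cycles.
\end{itemize}
The short cycles then split into those through vertex $g$ and those through vertex $2g$. Realizability at order $m$ becomes the existence of $X,Y\coeffge0$ with $X+Y$ fixed by $p$ and $[t^j]XY\ge q_j$ for $2g\le j\le m$.

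The one-sidedness that worries you is then harmless. Cycles of length at least $2g$ are disjoint from every other nonzero cycle, so they only lower $q_j$. Conversely, once the short weights are placed on prefixes and suffixes, any slack $[t^j]XY-q_j\ge0$ is realized by one long cycle on $[1,j]$.

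With $Z=X-Y$ and $4XY=(X+Y)^2-Z^2$, you get box bounds on the coefficients of $Z$ and upper bounds on selected coefficients of $Z^2$. \Inv{} is encoded directly in these one-sided constraints (Lemma~\ref{lem:squarehard}). Equalities are recovered from opposite inequalities and from tightness of box-bounded products, e.g.\ $2z_{e_0}z_{e_1}\le-2$ with $|z_{e_0}|,|z_{e_1}|\le1$. Exponents with distinct pairwise sums isolate each product at its own degree.

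Your guarantee of positive realizability is also unverified. Applying \cite{CzajaJamingMatolcsi2008} requires a nonnegative impulse response and a dominant-pole condition for the engineered $D$, and you check neither. The paper instead subtracts an odd-degree term $E$. This term leaves the encoded even coefficients intact and creates only automatically satisfied constraints. It gives $q(1+a)=1-\mathcal R$ with $\mathcal R\coeffge0$, hence an explicit nonnegative companion realization of $L/(pL)=1/p$. Stability then comes from the substitution $z\mapsto Kz$, which preserves $\dplus$.
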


The hardness assertion in Theorem~\ref{thm:main} concerns the number of states needed when a positive realization exists. In particular, it establishes hardness of deciding whether nonnegativity requires more states than an ordinary minimal realization.

For SISO systems, the result also describes the difficulty of finding a change of state coordinates that makes a minimal realization positive.

\begin{corollary}\label{cor:coordinates}
Given a minimal SISO realization $(A_0,b_0,c_0)$ with rational entries, deciding whether there is an invertible real matrix $T$ satisfying
\begin{equation}\label{eq:coordinates}
 T^{-1}A_0T\ge0,\qquad T^{-1}b_0\ge0,\qquad c_0^\top T\ge0
\end{equation}
is $\ER$-complete.
\end{corollary}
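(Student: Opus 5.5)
The plan is to derive Corollary~\ref{cor:coordinates} from Theorem~\ref{thm:main}, using the standard fact that all minimal realizations of a SISO transfer function are similar. Set $H(z)=c_0^\top(zI-A_0)^{-1}b_0$ and $m$ for the size of $A_0$; minimality gives $m=\mcm(H)$.

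\emph{Key equivalence.} We claim that an invertible $T$ satisfying \eqref{eq:coordinates} exists if and only if $\dplus(H)\le m$. If such a $T$ exists, then $(T^{-1}A_0T,\,T^{-1}b_0,\,T^\top c_0)$ is a nonnegative realization of $H$ with $m$ states. Conversely, suppose $\dplus(H)\le m$. Since always $\dplus(H)\ge m$, there is a positive realization $(A,b,c)$ of order exactly $m$. It realizes $H$ with $\mcm(H)$ states, so it is minimal. By the state-space isomorphism theorem there is an invertible $T$ with $A=T^{-1}A_0T$, $b=T^{-1}b_0$, and $c^\top=c_0^\top T$.

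\emph{Membership in $\ER$.} This follows directly. Introduce real variables for the entries of $T$ and of a matrix $S$, and impose the polynomial constraints $TS=I$, $SA_0T\ge0$, $Sb_0\ge0$, and $c_0^\top T\ge0$. These constraints have rational coefficients and polynomially many variables, and they are feasible exactly when \eqref{eq:coordinates} is solvable.

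\emph{Hardness.} We reduce from the hard instances of Theorem~\ref{thm:main}, which are SISO rational $H=N/D$ with bound $n=\mcm(H)$. Form the controllable canonical (companion) realization of $N/D$. When $N/D$ is reduced, this realization has rational entries, is minimal, and can be computed in polynomial time. Applying the key equivalence with $n=m$ then shows that the answer is preserved. The main point needing care is that the instances produced by the reduction are coprime, so that $\deg D=\mcm(H)$. If they are not, we cancel the gcd over $\Q$ in polynomial time, or equivalently pass to the controllable and observable part by an exact rational Kalman decomposition. Either way the transfer function is unchanged, and the bound becomes the McMillan degree of the reduced form.
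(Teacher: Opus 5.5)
Your proposal is correct and follows essentially the same route as the paper: similarity of minimal realizations gives the equivalence with $\dplus(H)\le\mcm(H)$, a polynomial-time minimal rational companion realization of the constructed transfer functions transfers hardness from Theorem~\ref{thm:main}, and membership follows by introducing $T$ and its inverse as variables. The minor differences do not change the argument: your one-sided condition $TS=I$ suffices for square matrices, and cancelling a gcd is unnecessary because the constructed numerator is the constant $K^m$.
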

\begin{proof}
Any two minimal realizations of the same transfer function are related by an invertible similarity transformation \cite[Sec.~II]{BenvenutiFarina2004}. Thus~\eqref{eq:coordinates} holds exactly when a positive realization exists at the McMillan degree. A minimal rational companion realization of each transfer function constructed in the proof of Theorem~\ref{thm:main} is computable in polynomial time, which transfers hardness to~\eqref{eq:coordinates}. For membership in $\ER$, introduce matrices $T,V$ with $TV=VT=I$ and impose $VA_0T\ge0$, $Vb_0\ge0$, and $c_0^\top T\ge0$.
\end{proof}

The proof of Theorem~\ref{thm:main} is given in Section~\ref{sec:mainproof}.

\section{Positive Realization at the McMillan Degree}\label{sec:structure}

Consider a transfer function $H=1/p$, where $p$ is monic of degree $m$. We ask whether $H$ has a positive realization with exactly $m$ states. Its delayed impulse response and nonnegativity force a chain through all states, reducing the search to the remaining entries of a normalized state matrix. For the denominator coefficient pattern specified in Lemma~\ref{lem:compression}, we then express existence of that matrix as inequalities on polynomial coefficients. These inequalities provide the connection to real polynomial feasibility used in Section~\ref{sec:construction}.

\subsection{State structure imposed by the impulse response}\label{sec:hess}

Let $H(z)=1/p(z)$, where $p$ is a monic real polynomial of degree $m$. Since the numerator is one, the McMillan degree is $m$. The leading term of $p$ is $z^m$, so $H(z)=z^{-m}+O(z^{-m-1})$ as $z\to\infty$. Comparing this expansion with~\eqref{eq:markov} gives
\begin{equation}\label{eq:prefix}
 h_k=0\quad\text{for }0\le k<m-1,\qquad h_{m-1}=1.
\end{equation}

Consider a positive realization with $m$ states. The input gain at state $j$ is $b_j$, the output gain from state $i$ is $c_i$, and $A_{ij}$ propagates state $j$ to state $i$ in one time step. For example, $h_1=\sum_{i,j}c_iA_{ij}b_j$ sums the contributions of all routes with one state transition. More generally, $h_k=c^\top A^kb$ sums the contributions of routes with $k$ state transitions. All these contributions are nonnegative. Hence~\eqref{eq:prefix} excludes every positive route with fewer than $m-1$ transitions and requires at least one with exactly $m-1$.

Such a shortest route cannot repeat a state. Otherwise, deleting the part between two occurrences of the same state would give a shorter positive route, contradicting~\eqref{eq:prefix}. It therefore passes through $m$ distinct states and uses all states of the realization. Number them along the route as $1,\ldots,m$. Input acting on a later state, or output read from an earlier state, would produce a shorter route. A forward coupling that skips a state would do the same. Thus the input acts only on state 1, the output reads only state $m$, and the remaining possible couplings are diagonal or directed from later states to earlier states. For $m=3$, the conditions $h_0=h_1=0$ and $h_2=1$ therefore give the chain $1\to2\to3$, with the input acting only on state 1 and the output taken only from state 3.

A positive diagonal scaling makes the input gain and every forward coupling equal to one. Since their product with the output gain is $h_{m-1}=1$, the output gain also becomes one. The following lemma states the resulting existence criterion.

\begin{lemma}\label{lem:hess}
Let $p$ be a monic real polynomial of degree $m\ge1$. Then $\dplus(1/p)=m$ if and only if there exists $J\in\R^{m\times m}$ with $J\ge0$ such that
\begin{equation}\label{eq:hess}
\begin{aligned}
 J_{i+1,i}&=1,\quad J_{ij}=0\ \text{for }i>j+1,\\
 \det(zI-J)&=p(z).
\end{aligned}
\end{equation}
Every $m$-state positive realization of $1/p$ can be transformed to $(J,\e_1,\e_m)$ by a state permutation and a positive diagonal similarity.
\end{lemma}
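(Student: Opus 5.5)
The proof has two directions, and the informal route argument preceding the lemma already outlines the harder one.

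\emph{Sufficiency.} Suppose $J\ge0$ satisfies~\eqref{eq:hess}. Then $(J,\e_1,\e_m)$ is a nonnegative triple, so it suffices to show $\e_m^\top(zI-J)^{-1}\e_1=1/p(z)$. By the adjugate formula this entry equals $(-1)^{m+1}\det M/\det(zI-J)$, where $M$ is $zI-J$ with row $1$ and column $m$ deleted. Because $J$ is upper Hessenberg with unit subdiagonal, $M$ is upper triangular with diagonal entries $-J_{i+1,i}=-1$. Hence $\det M=(-1)^{m-1}$ and the entry equals $1/\det(zI-J)=1/p$. Thus $\dplus(1/p)\le m$. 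Combined with $\dplus\ge\mcm=m$, this gives equality.

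\emph{Necessity and normal form.} Let $(A,b,c)$ be an $m$-state positive realization. I make the route argument precise with the weighted digraph $G$ on states $\{1,\dots,m\}$, where $j\to i$ is an edge when $A_{ij}>0$. Expanding $h_k=\sum c_{i_k}A_{i_ki_{k-1}}\cdots A_{i_1i_0}b_{i_0}$ writes $h_k$ as a sum of nonnegative walk weights.

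First, $h_{m-1}=1>0$ gives a walk $i_0,\dots,i_{m-1}$ with $b_{i_0}>0$, $c_{i_{m-1}}>0$ and all edges positive. If $i_s=i_t$ for some $s<t$, deleting the cycle gives a positive walk of length $m-1-(t-s)<m-1$, which contradicts~\eqref{eq:prefix}. So the walk visits all $m$ states exactly once, and I relabel by the permutation $i_s\mapsto s+1$.

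Next come the zero patterns, each obtained by exhibiting a shorter positive walk:
\begin{itemize}
\item If $b_j>0$ with $j>1$, the path $j\to j+1\to\cdots\to m$ has length $m-j<m-1$.
\item If $c_i>0$ with $i<m$, the path $1\to\cdots\to i$ has length $i-1<m-1$.
\item If $A_{ij}>0$ with $i>j+1$, the path $1\to\cdots\to j\to i\to\cdots\to m$ has length $m-1-(i-j-1)<m-1$.
\end{itemize}
Hence $b=b_1\e_1$ and $c=c_m\e_m$, and $A$ is upper Hessenberg with positive subdiagonal entries $a_i=A_{i+1,i}$.

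Finally, apply the positive diagonal similarity $D=\diag(d_1,\dots,d_m)$ with $d_1=b_1$ and $d_{i+1}=a_id_i$. This gives new input gain $b_1/d_1=1$ and subdiagonal entries $a_id_i/d_{i+1}=1$, while preserving $J:=D^{-1}AD\ge0$ and its zero pattern. The output gain becomes $c_md_m=c_mb_1a_1\cdots a_{m-1}=h_{m-1}=1$. Since $(J,\e_1,\e_m)$ realizes $1/p$ with $m=\deg p$ states, the realization is minimal, so its characteristic polynomial is the denominator: $\det(zI-J)=p$. This can also be read directly from the sufficiency computation, which gives $1/\det(zI-J)=1/p$.

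\emph{Main obstacle.} The only delicate step is the cycle-deletion argument, specifically checking that the shortened walks are still positive walks with nonzero endpoint gains. This holds because every factor in a walk weight is nonnegative and the retained factors are exactly the ones shown to be positive. The rest of the proof is bookkeeping.
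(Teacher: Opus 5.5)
Your proof is correct and follows essentially the same route as the paper: a shortest positive input--output walk must visit all $m$ states, the zero patterns of $b$, $c$ and $A$ follow because any violation would give a shorter positive walk, a positive diagonal similarity normalizes the gains, and the cofactor/Cramer computation gives the converse. The one identity you state without comment, $c_mb_1a_1\cdots a_{m-1}=h_{m-1}$, holds because in upper Hessenberg form a length-$(m-1)$ walk from state $1$ to state $m$ must use the subdiagonal at every step, which is how the paper justifies it.
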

The matrix in~\eqref{eq:hess} is upper Hessenberg with unit subdiagonal. For input vector $\e_1$ and output vector $\e_m$, its transfer function is $\e_m^\top(zI-J)^{-1}\e_1=1/\det(zI-J)$. Hence $1/p$ has a positive realization of order $m$ exactly when the nonnegative entries on and above the diagonal can be chosen to satisfy $\det(zI-J)=p(z)$. The proof of Lemma~\ref{lem:hess} is given in Appendix~\ref{app:structure}.

\subsection{Conditions on the denominator coefficients}\label{sec:compression}

It remains to decide whether the free nonnegative entries of $J$ can reproduce the prescribed denominator $p$. To relate those entries to the denominator coefficients, write
\begin{equation}\label{eq:reverse}
 q(t)=t^m p(1/t)=1+\sum_{k=1}^m q_kt^k.
\end{equation}
For a matrix $J$ satisfying~\eqref{eq:hess}, $q(t)=\det(I-tJ)$. A return coupling $J_{ij}$, with $i\le j$, closes a cycle along the forward chain from state $i$ to state $j$. The cycle has length $j-i+1$ and weight $J_{ij}$, since all forward weights are one. In the determinant expansion, a single cycle contributes a negative term and two cycles sharing no state contribute a positive product \cite[Eq.~(5)]{TorreMayo2007}. The full expansion is given in Appendix~\ref{app:structure}.

In the five-state model in Fig.~\ref{fig:stateexample}(a), the return couplings $J_{12},J_{23},J_{35}$ have weight one, and all other entries on and above the diagonal are zero. The two length-two cycles contribute $-2t^2$, the length-three cycle contributes $-t^3$, and the only disjoint pair contributes $t^5$. Hence $q(t)=1-2t^2-t^3+t^5$. The equivalent model in Fig.~\ref{fig:stateexample}(b) is obtained by moving the coupling $J_{23}$ to $J_{12}$, making its weight two, and adding $J_{15}=1$. Both models therefore have the same denominator $p(z)=z^5-2z^3-z^2+1$ and, by Lemma~\ref{lem:hess}, the same transfer function $1/p$.

\begin{figure}[t]
\centering
\begin{tikzpicture}[x=1.15cm,y=1cm,>=Latex,font=\footnotesize,
 state/.style={circle,draw,minimum size=5mm,inner sep=0pt}]
\foreach \row/\yy in {a/0,b/-2.4}{
  \foreach \i in {1,...,5}{\node[state] (\row\i) at ({\i-1},\yy) {$\i$};}
  \foreach \i/\j in {1/2,2/3,3/4,4/5}{\draw[->] (\row\i)--node[below] {$1$}(\row\j);}
  \draw[->] (-.8,\yy) node[left] {$u$}--(\row1);
  \draw[->] (\row5)--(4.8,\yy) node[right] {$y$};
}
\node[anchor=west] at (-.8,1.25) {(a) Original couplings};
\draw[->] (a2.north)..controls (1,.65) and (0,.65)..node[above] {$1$}(a1.north);
\draw[->] (a3.north)..controls (2,.65) and (1,.65)..node[above] {$1$}(a2.north);
\draw[->] (a5.north)..controls (4,.9) and (2,.9)..node[above] {$1$}(a3.north);
\node[anchor=west] at (-.8,-1.15) {(b) Redistributed couplings};
\draw[->] (b2.north)..controls (1,-1.75) and (0,-1.75)..node[above] {$2$}(b1.north);
\draw[->] (b5.north)..controls (4,-1.5) and (2,-1.5)..node[above] {$1$}(b3.north);
\draw[->] (b5.south)..controls (4,-3.15) and (0,-3.15)..node[below] {$1$}(b1.south);
\end{tikzpicture}
\caption{Two five-state positive realizations of $1/(z^5-2z^3-z^2+1)$. All forward couplings have weight one. The input enters state 1, the output is taken at state 5, and return couplings carry the indicated weights.}
\label{fig:stateexample}
\end{figure}

The coefficient pattern used in the reduction allows redistribution of the return couplings to be expressed by a small number of weights. For an integer $g\ge2$, the conditions $q_1=\cdots=q_{g-1}=0$ exclude nonzero cycles of length less than $g$. If such a cycle existed, take the shortest such length. Its coefficient would be minus the sum of the weights of cycles of that length, since products of two or more nonzero cycles have greater total length. This contradicts the zero coefficient. With $m=3g-1$, at most two nonzero cycles can occupy disjoint state sets. The denominator coefficients therefore depend only on weights of individual cycles and products of two weights. The next lemma replaces the unknown locations of the couplings by two families of nonnegative coefficients.

\begin{lemma}\label{lem:compression}
Let $g\ge2$ be an integer and set $m=3g-1$. Let $q(t)=1+\sum_{k=1}^m q_kt^k$ have real coefficients and satisfy
\begin{equation}\label{eq:short}
 q_k=0\ \text{for }1\le k<g,\qquad q_k=-s_k\ \text{for }g\le k<2g,
\end{equation}
where $s_k\ge0$. Set $p(z)=z^m q(1/z)$. Then $\dplus(1/p)=m$ if and only if there exist $X_k,Y_k\ge0$ satisfying
\begin{align}\label{eq:splitineq}
 X_k+Y_k&=s_k &&\text{for }g\le k<2g,\nonumber\\
 [t^j]X(t)Y(t)&\ge q_j &&\text{for }2g\le j\le m,
\end{align}
where $X(t)=\sum_{k=g}^{2g-1}X_kt^k$ and $Y(t)=\sum_{k=g}^{2g-1}Y_kt^k$.
\end{lemma}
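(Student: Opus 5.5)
By Lemma~\ref{lem:hess}, $\dplus(1/p)=m$ holds exactly when there is a nonnegative upper Hessenberg $J$ with unit subdiagonal and $\det(I-tJ)=q(t)$. I would therefore work entirely with the cycle expansion of $\det(I-tJ)$ referred to after~\eqref{eq:reverse}, which I take from Appendix~\ref{app:structure}. A return coupling $J_{ij}$ with $i\le j$ corresponds to the integer interval $[i,j]\subseteq\{1,\dots,m\}$, with weight $J_{ij}$ and length $j-i+1$. Two cycles are disjoint exactly when their intervals are disjoint. The expansion reads
\[
q(t)=\sum_{\mathcal F}(-1)^{|\mathcal F|}\prod_{[i,j]\in\mathcal F}J_{ij}\,t^{\sum(j-i+1)},
\]
where $\mathcal F$ runs over families of pairwise disjoint intervals. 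The proof then reduces to a combinatorial bookkeeping of intervals.

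\emph{Necessity.} Suppose $J$ exists. As argued before the lemma, $q_1=\cdots=q_{g-1}=0$ forces every nonzero interval to have length at least $g$. Since $m=3g-1$, no three such intervals can be pairwise disjoint.

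So, for $g\le k<2g$, the coefficient $q_k$ equals minus the total weight of length-$k$ intervals, because pairs have total length $\ge 2g$. Hence this total weight is $s_k$. For $2g\le j\le m$, the coefficient $q_j$ is minus the weight of length-$j$ intervals plus the sum of products over disjoint pairs of total length $j$.

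Now split the intervals of length $k\in[g,2g-1]$ into two classes. Let $X_k$ be the total weight of those ending at position $\le 2g-1$, and $Y_k$ the total weight of those ending at position $\ge 2g$. Then $X_k+Y_k=s_k$.

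In any disjoint pair, the right interval has length $\ge g$ and ends at $\le m$, so the left one ends at $\le m-g=2g-1$. The right one starts after the left one, hence at $\ge g+1$, and so ends at $\ge 2g$. Both members of such a pair have length $<2g$, because their total is $\le m<4g$ and each is $\ge g$. Hence every disjoint pair consists of one $X$-interval and one $Y$-interval.

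It follows that the pair sum in $q_j$ is bounded above by $[t^j]X(t)Y(t)$. The extra terms in $X(t)Y(t)$ are nonnegative products coming from non-disjoint $X$--$Y$ pairs. Dropping the nonpositive single-interval term then gives $q_j\le[t^j]XY$.

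\emph{Sufficiency.} Given $X_k,Y_k\ge0$, I would construct $J$ explicitly:
\begin{itemize}
\item put weight $X_k$ on the interval $[1,k]$;
\item put weight $Y_k$ on the interval $[m-k+1,m]$, which starts at $\ge g+1>1$, so it never coincides with an $X$-interval;
\item for $2g\le j\le m$, put weight $d_j=[t^j]XY-q_j\ge0$ on $[1,j]$, which is distinct from all $[1,k]$ with $k<2g$.
\end{itemize}
Two $X$-intervals both contain state $1$ and two $Y$-intervals both contain state $m$, so neither kind forms a disjoint pair. An $X$-interval $[1,a]$ and a $Y$-interval $[m-b+1,m]$ are disjoint iff $a+b\le m$. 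Thus the pairs of total length $j\le m$ are all disjoint and contribute exactly $[t^j]XY$. A $d_j$-interval has length $\ge 2g$, so together with any other interval (length $\ge g$) the total exceeds $m$; hence it is disjoint from nothing and contributes only $-d_j t^j$. No triples occur.

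Reading off coefficients gives $q_k=0$ for $k<g$, $q_k=-(X_k+Y_k)=-s_k$ for $g\le k<2g$, and $[t^j]XY-d_j=q_j$ for $2g\le j\le m$. Therefore $\det(I-tJ)=q(t)$, and Lemma~\ref{lem:hess} gives $\dplus(1/p)=m$.

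The main obstacle is the necessity direction: choosing the $X/Y$ split so that every disjoint pair is counted as an $X$--$Y$ product. The position threshold $2g-1$ versus $2g$ is what I would rely on, and it depends on the sharp relation $m=3g-1$. The determinant expansion itself I treat as given from Appendix~\ref{app:structure}.
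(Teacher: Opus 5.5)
Your proof is correct and follows the paper's argument: you reduce via Lemma~\ref{lem:hess} to the interval expansion, split the short intervals into two classes with no disjoint pair inside a class, and for sufficiency place $X_k$ on prefixes, $Y_k$ on suffixes, and the surplus $[t^j]XY-q_j$ on the long prefixes $[1,j]$. The only cosmetic difference is in the split: you classify by right endpoint ($\le 2g-1$ versus $\ge 2g$) and show directly that the left member of a disjoint pair is an $X$-interval, whereas the paper classifies by left endpoint ($\le g$) and uses the shared vertices $g$ and $2g$.
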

Lemma~\ref{lem:compression} replaces the unknown entries of $J$ by the nonnegative coefficients of two polynomials, $X$ and $Y$, subject to~\eqref{eq:splitineq}. The complete proof is given in Appendix~\ref{app:structure}.

In Fig.~\ref{fig:stateexample}(b), the length-two cycle starting at state 1 gives $X(t)=2t^2$, and the length-three cycle ending at state 5 gives $Y(t)=t^3$. Their product contributes $2t^5$. The added length-five cycle subtracts $t^5$, leaving the prescribed coefficient $q_5=1$.

The equality $X_k+Y_k=s_k$ fixes the sum $X+Y$ from the denominator coefficients. Thus only the difference between $X$ and $Y$ remains to be determined. Define the known polynomial $S(t)=\sum_{k=g}^{2g-1}s_kt^k$ and the unknown polynomial $Z(t)=X(t)-Y(t)$, with coefficients $Z_k=X_k-Y_k$. Then $X=(S+Z)/2$ and $Y=(S-Z)/2$, so their coefficients are nonnegative exactly when $|Z_k|\le s_k$ for $g\le k<2g$. The product constraints can also be expressed in terms of $Z$. Since
\begin{equation}\label{eq:squareidentity}
 XY=\frac{S^2-Z^2}{4},
\end{equation}
the inequalities in~\eqref{eq:splitineq} are equivalent to
\begin{equation}\label{eq:coefficientcriterion}
 [t^j]Z^2\le[t^j]S^2-4q_j\qquad\text{for }2g\le j\le m.
\end{equation}
For the coefficient pattern in Lemma~\ref{lem:compression}, the search over positive state matrices is therefore equivalent to a search over the coefficients of $Z$. The denominator fixes both the bounds $|Z_k|\le s_k$ and the right-hand side of~\eqref{eq:coefficientcriterion}. Section~\ref{sec:construction} chooses these denominator coefficients so that an $m$-state positive realization exists exactly when a prescribed system of bounded real equations is feasible.

\section{Transfer Functions for the Hardness Reduction}\label{sec:construction}

The reduction encodes bounded real equations in the denominator of a SISO transfer function. The coefficient criterion from Section~\ref{sec:structure} makes feasibility equivalent to positive realization at the McMillan degree. The construction also guarantees positive realizability for every instance and permits a scaling that ensures stability.

\subsection{From bounded equations to coefficient constraints}\label{sec:source}

The problem $\Inv$ asks whether a finite system of equations of the forms $x=1$, $x+y=z$, and $xy=1$ has a real solution with every variable in $[1/2,2]$. This problem is $\ER$-complete \cite[Definition~5 and Theorem~7]{Abrahamsen2018}. Our reduction starts from $\Inv$. Replace $x=1$ by $x^2=1$, which is equivalent on $[1/2,2]$. Add a variable constrained to equal one, so every $\Inv$ instance has at least one variable.

To encode the $\Inv$ equations, the following auxiliary problem introduces new polynomials $S$ and $Z$. Section~\ref{sec:encoding} places their coefficients at the degrees required by Lemma~\ref{lem:compression}.

\begin{definition}\label{def:square}
For an integer $D\ge1$, the given data are a polynomial with rational coefficients
\begin{equation}\label{eq:amplitudes}
 S(t)=\sum_{e=0}^D s_et^e,\qquad s_0=1,\quad 0\le s_e\le1,\quad s_D>0,
\end{equation}
a set of degrees $\mathcal D\subseteq\{0,\ldots,2D\}$, and rational bounds $\beta_d$ satisfying
\begin{equation}\label{eq:range}
 -[t^d]S^2\le\beta_d\le[t^d]S^2\qquad\text{for }d\in\mathcal D.
\end{equation}
The coefficient feasibility problem is to determine whether there exist real coefficients $z_0,\ldots,z_D$ such that $Z(t)=\sum_{e=0}^D z_et^e$ satisfies
\begin{equation}\label{eq:squareproblem}
\begin{aligned}
 |z_e|&\le s_e &&\text{for }0\le e\le D,\\
 [t^d]Z^2&\le\beta_d &&\text{for }d\in\mathcal D.
\end{aligned}
\end{equation}
\end{definition}

Definition~\ref{def:square} isolates the coefficient constraints that will be placed in the denominator. Its polynomial $S$ supplies bounds on the unknown coefficients of $Z$. Section~\ref{sec:encoding} places the coefficients of $S$ at the degrees required by Lemma~\ref{lem:compression}. The constraints in~\eqref{eq:squareproblem} are quadratic inequalities in $z_0,\ldots,z_D$, since
\[
 [t^d]Z^2=\sum_{\substack{0\le i,j\le D\\i+j=d}}z_i z_j.
\]

\begin{lemma}\label{lem:squarehard}
Given an $\Inv$ instance, one can construct in polynomial time a polynomial $S$, a degree set $\mathcal D$, and bounds $\beta_d$ satisfying Definition~\ref{def:square}. The $\Inv$ instance has a real solution if and only if the resulting constraints~\eqref{eq:squareproblem} have a real solution. The degree $D$ is bounded by a polynomial in the encoding length of the $\Inv$ instance.
\end{lemma}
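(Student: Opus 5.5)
We reduce $\Inv$, in the normalized form described above, to the constraints~\eqref{eq:squareproblem}. The idea is to make each selected coefficient $[t^d]Z^2$ consist of prescribed monomials only. Three devices do this. First, setting $s_e=0$ at every unused degree $e$ forces $z_e=0$, so only a sparse support $E=\{e:s_e>0\}$ contributes to $Z^2$. Second, extremal inequalities pin some coefficients to constants. Third, negated copies of variables turn one-sided inequalities into equalities.

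\emph{Support and constants.} Index the $\Inv$ variables $x_1,\dots,x_N$. For each $x_i$, assign a degree $a_i$ carrying $z_{a_i}=\lambda x_i$ and a degree $a_i'$ carrying $z_{a_i'}=-\lambda x_i$, with a small rational scale $\lambda$. Also assign a few auxiliary degrees $u$ that will carry constants. Take $s_{a_i}=s_{a_i'}=2\lambda$, so that $|z_e|\le s_e$ encodes $|x_i|\le 2$. We pin $z_0=\pm1$ via $s_0=1$. We pin each auxiliary $z_u$ by the constraint $[t^u]Z^2=2z_0z_u\le-2s_u$. Together with $|z_0|\le1$ and $|z_u|\le s_u$, this forces $z_u=-s_u\,\mathrm{sign}(z_0)$. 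Since the whole system is invariant under $Z\mapsto -Z$, we may assume $z_0=1$. All pinned values are then fixed rationals.

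\emph{Encoding the equations.} Each equation is split into two inequalities.
\begin{itemize}
\item Copy relations $z_{a_i}+z_{a_i'}=0$: choose a degree $d$ with $d-a_i=d-a_i'+{}$(offset) realized by one pinned constant, so that $[t^d]Z^2$ is a linear form in $z_{a_i},z_{a_i'}$. Two such degrees give $\le$ and $\ge$.
\item $x_i+x_j=x_k$: at a suitable degree $d$, place constants at $d-a_i$, $d-a_j$ and $d-a_k'$, giving a linear combination $c(z_{a_i}+z_{a_j}+z_{a_k'})$ plus a known constant. The mirrored degree using the primed copies gives the reverse inequality.
\item $x_ix_j=1$ and $x_i^2=1$: the degree $a_i+a_j$ gives $2z_{a_i}z_{a_j}=2\lambda^2x_ix_j\le 2\lambda^2$. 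The degree $a_i'+a_j$ gives $-2\lambda^2x_ix_j\le-2\lambda^2$. For squares we use $2a_i$ and $a_i+a_i'$ in the same way.
\end{itemize}
The lower bound $x_i\ge 1/2$ comes from $2z_0z_{a_i'}\le -\lambda$ at a degree reserved for it, arranged as with the constants. Conversely, from a solution of~\eqref{eq:squareproblem} we read off $x_i=z_{a_i}/\lambda$. From a solution of $\Inv$ we define $Z$ by the assignments above, and every constraint holds by construction. This gives the equivalence.

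\emph{Degree design (main obstacle).} We must choose $E$ with $\max E=D$ polynomial such that each selected $d\in\mathcal D$ receives exactly the intended pairs $i+j=d$ with $i,j\in E$, and no others. Constraints at degrees outside $\mathcal D$ are absent, so collisions there are harmless. We first try a positional encoding: write each degree as $B^2\alpha+B\beta+\gamma$ with base $B$ polynomial in the instance size and digits tailored by type (variable, copy, constant, equation slot). Intended collisions are then matched digitwise, and unintended sums are separated by the absence of carries. If that becomes unwieldy, we fall back on an Erdős--Tur\'an Sidon set $\{2pk+(k^2\bmod p)\}$ for the variable degrees, with constant degrees placed as $d-a$ for the intended slots $d$.

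\emph{Remaining conditions.} Finally we check Definition~\ref{def:square}. We need $s_0=1$, $0\le s_e\le1$ (choosing $\lambda$ and the constants small), and $s_D>0$ (taking $D=\max E$). The range condition~\eqref{eq:range} also holds: every intended product satisfies $|[t^d]Z^2|\le[t^d]S^2$ whenever $|z_e|\le s_e$, and we choose each $\beta_d$ inside that interval. All numbers have polynomial bit size, so the construction runs in polynomial time.
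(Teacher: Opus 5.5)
Your architecture is the paper's: sparse support forced by $s_e=0$, coefficients pinned by extremal bilinear constraints against a unit reference, negated copies of the variables, linear rows read off where variable coefficients meet pinned constants, and $x_ix_j=1$ encoded through $z_{a_i}z_{a_j}$ and $z_{a_i'}z_{a_j}$. The step you call the main obstacle, however, is the actual content of the lemma, and you leave it as a choice between two unverified schemes.

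The isolation requirements are stronger than a Sidon property for the variable degrees. Your reference sits at degree $0$, so every pinning constraint is imposed at the degree $u$ of the pinned coefficient itself, and every lower bound $x_i\ge\tfrac12$ at $a_i'$. These degrees must therefore avoid all sums of two nonzero support elements. Placing constants at $d-a$ for each slot $d$ also creates sums $(d-a_i)+a_\ell$ and $(d-a_i)+(d'-a_k)$, which must avoid every constrained degree except in the intended pairs.

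The paper resolves this explicitly. The references $e_0=0$, $e_1$ and the variables $f_i=e_{i+2}$ come from the Sidon-type set $e_k=2hk^2+k$, all at most $U=e_{h-1}$. The auxiliaries of row $j$ sit in the window $[T_j-U,T_j]$ with $T_j=(Q+j+1)w$ and $w=2U+1$. Hence variable--variable sums lie below all windows and auxiliary--auxiliary sums exceed $L=2Qw$. A competing product $e_p+T_k-f_\ell=T_j-f_i+e_\rho$ forces $k=j$, and then the Sidon property forces $p=\rho$ and $\ell=i$. Both directions of the equivalence depend on a verification of this kind, and your proposal does not supply one.

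There is also a step that fails as written. Once $z_0=1$, your pinning $2z_0z_u\le-2s_u$ gives $z_u=-s_u$, so every pinned constant is negative. Every linear form you build then has coefficients of one sign, and a bound $[t^d]Z^2\le\beta_d$ gives only a lower bound on that form. The mirrored primed copies rescue $x_i+x_j=x_k$, but not the copy relation itself. The mirror of $z_{a_i}+z_{a_i'}$ is the same form, so both of your degrees give only $z_{a_i}+z_{a_i'}\ge0$.

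This one-sided relation is not enough. Put $y_i=-z_{a_i'}/\lambda$ and take the infeasible instance $x_1+x_2=x_3$, $x_3+x_4=x_1$. The constraints your encoding actually imposes are $y_i\le x_i$, $y_i\ge\tfrac12$, $|x_i|,|y_i|\le2$, $x_1+x_2\ge y_3$, $x_3\ge y_1+y_2$, $x_3+x_4\ge y_1$, and $x_1\ge y_3+y_4$. They are all satisfied by $x=(1,\tfrac12,1,\tfrac12)$ with every $y_i=\tfrac12$.

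You need pinned constants of both signs. For example, pin a second reference opposite to the first, as the paper does with $2z_{e_0}z_{e_1}\le-2$, and pin each auxiliary against whichever reference gives the desired sign. Then arbitrary rows $\sum_i a_{ji}v_i\le b_j$ can be encoded, including both halves of $v_i+v_{n+i}=0$.
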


The coefficient feasibility problem in Definition~\ref{def:square} is therefore $\ER$-hard. The construction and proof of Lemma~\ref{lem:squarehard} are given in Appendix~\ref{app:encoding}.

\subsection{From coefficient constraints to positive realization}\label{sec:encoding}

Given $S,\mathcal D,\beta_d$ from Definition~\ref{def:square}, we construct a denominator $p$ so that $1/p$ has a positive realization of order $m$ exactly when the coefficient constraints are feasible. We place the encoded inequalities at even degrees and use the remaining odd coefficients to guarantee positive realizability at some finite order.

The identity~\eqref{eq:squareidentity} converts the bounds on coefficients of $Z^2$ into lower bounds on coefficients of $(S^2-Z^2)/4$. Collect these bounds in
\begin{equation}\label{eq:boundpoly}
 B(t)=\sum_{d\in\mathcal D}\frac{[t^d]S^2-\beta_d}{4}\,t^d,
 \qquad 0\preceq_{\rm coef}B\preceq_{\rm coef}\tfrac12S^2.
\end{equation}
Choose
\begin{equation}\label{eq:sizes}
 g=6D+4,\qquad m=3g-1=18D+11,
\end{equation}
and define
\begin{equation}\label{eq:ab}
 a(t)=t^g\bigl(1+tS(t^2)\bigr),\qquad b(t)=t^{2g+2}B(t^2).
\end{equation}
The coefficients of $a$ specify the sum $X+Y$ in Lemma~\ref{lem:compression}. They are one at degree $g$ and $s_e$ at degree $g+1+2e$ for $0\le e\le D$. Since $\deg a=g+2D+1<2g$, they lie in the required range. The coefficient of $B$ at degree $d$ becomes the coefficient of $b$ at the even degree $2g+2+2d<m$.

To ensure positive realizability, set $\gamma=1+a(1)b(1)$ and
\[
 E(t)=\gamma\sum_{\substack{2g+1\le j\le m\\j\text{ odd}}}t^j.
\]
The uniform odd coefficients of $E$ will give the inequality $aE\coeffge ab$ used in Lemma~\ref{lem:existence}. Define
\begin{equation}\label{eq:denominator}
 q(t)=1-a(t)+b(t)-E(t),\qquad p(z)=z^mq(1/z).
\end{equation}
Then $p$ is monic of degree $m$. The correction $E$ leaves the encoded even coefficients unchanged, so
\[
 [t^{2g+2+2d}]q=\frac{[t^d]S^2-\beta_d}{4}\quad\text{for }d\in\mathcal D.
\]

\begin{samepage}
\begin{lemma}\label{thm:encoding}
The constraints~\eqref{eq:squareproblem} are feasible if and only if $\dplus(1/p)=m$, where $p$ is defined in~\eqref{eq:denominator}.
\end{lemma}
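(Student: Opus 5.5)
The plan is to apply Lemma~\ref{lem:compression} to $q=1-a+b-E$ and then turn the resulting splitting condition into~\eqref{eq:squareproblem} by the change of variables $X=(S'+Z')/2$, $Y=(S'-Z')/2$ from Section~\ref{sec:compression}. The substitution $t\mapsto t^2$ then removes the degree shift.

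\textbf{Step 1: the hypotheses of Lemma~\ref{lem:compression}.} Since $a$ starts at degree $g$ and $\deg a=g+2D+1<2g$, we get $q_k=0$ for $1\le k<g$. The terms $b$ and $E$ start at degree at least $2g+1$, so $q_k=-s'_k$ for $g\le k<2g$, where $s'_k=[t^k]a\ge0$. Thus the sum polynomial is $S'(t)=a(t)=t^g(1+tS(t^2))$.

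\textbf{Step 2: rewrite the criterion.} By~\eqref{eq:coefficientcriterion}, $\dplus(1/p)=m$ holds if and only if some $Z'$ with coefficients in degrees $[g,2g)$ satisfies $|Z'_k|\le s'_k$ and $[t^j]Z'^2\le[t^j]a^2-4q_j$ for $2g\le j\le m$. The bounds $|Z'_k|\le s'_k$ confine $Z'$ to the support of $a$. So I will write
\[
Z'(t)=t^g\bigl(z_{-1}+tZ(t^2)\bigr),\qquad |z_{-1}|\le1,\quad |z_e|\le s_e .
\]
Expanding gives $Z'^2=t^{2g}\bigl(z_{-1}^2+2z_{-1}tZ(t^2)+t^2Z(t^2)^2\bigr)$, and similarly $a^2=t^{2g}\bigl(1+2tS(t^2)+t^2S(t^2)^2\bigr)$. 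I will then check the constraints degree by degree.
\begin{itemize}
\item At degree $j=2g$ we have $q_{2g}=0$, so the condition is $z_{-1}^2\le1$, which is automatic.
\item At the even degrees $j=2g+2+2d$, the odd correction $E$ vanishes and $q_j=[t^d]B$ (or $0$ when $d\notin\mathcal D$). The condition becomes $[t^d]Z^2\le[t^d]S^2-4[t^d]B$. By~\eqref{eq:boundpoly} this equals $\beta_d$ for $d\in\mathcal D$. For $d\notin\mathcal D$ it reads $[t^d]Z^2\le[t^d]S^2$, which follows from $|z_e|\le s_e$.
\item At even degrees beyond $2g+2+4D$, both sides vanish, as they do for $a^2$.
\end{itemize}
This leaves only the odd degrees $j=2g+1+2e$. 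There $q_j=-\gamma$ whenever $j\le m$, and the condition is $2z_{-1}z_e\le2s_e+4\gamma$. This holds trivially because $|z_{-1}z_e|\le s_e$.

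\textbf{Step 3: conclude.} Step 2 shows that the criterion of Lemma~\ref{lem:compression} holds exactly when~\eqref{eq:squareproblem} is feasible. In the forward direction, take $z_{-1}=0$ (or any admissible value) together with the given $z_e$. In the reverse direction, read the $z_e$ off $Z'$.

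\textbf{Main obstacle.} The delicate part is the bookkeeping of degrees. I must verify three things. First, the chosen $g=6D+4$ makes $2g+2+4D\le m$. Second, the odd positions $2g+1+2e$ for $e\le 2D$ all fall in $[2g+1,m]$, where $E$ is present, or else are harmless. Third, $a^2$ and $Z'^2$ have no contributions beyond degree $m$ that would need matching. Since $\deg a^2=2g+4D+2\le m=3g-1$, the last point holds. The first two follow from $4D+2<g-1$.
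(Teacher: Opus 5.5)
Your proposal is correct and follows essentially the paper's argument. The paper works with $X,Y$ from Lemma~\ref{lem:compression} and the identity $4XY=a^2-(X-Y)^2$, which is exactly your $Z'$-form of the criterion; its converse uses the explicit $X,Y$ in~\eqref{eq:constructXY}, which corresponds to your choice $z_{-1}=0$. Your degree-by-degree check (degree $2g$, the odd degrees absorbed by $E$, and the non-encoded even degrees) is the paper's observation that every other coefficient of $q$ from degree $2g$ on is zero or negative, written out in detail.
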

\end{samepage}
\begin{proof}
Suppose $d_+(1/p)=m$. By Lemma~\ref{lem:compression}, there are polynomials $X,Y$ with nonnegative coefficients such that $X+Y=a$ and $[t^j]XY\ge[t^j]q$ for $2g\le j\le m$. Define $z_*=[t^g](X-Y)$ and $z_e=[t^{g+1+2e}](X-Y)$ for $0\le e\le D$, and set $Z(t)=\sum_{e=0}^D z_et^e$. The coefficient bounds $|[t^k](X-Y)|\le[t^k]a$ give
\begin{equation}\label{eq:dummy}
 X-Y=t^g\bigl(z_*+tZ(t^2)\bigr),\qquad |z_*|\le1,\quad |z_e|\le s_e.
\end{equation}
Since $X+Y=a$, we have
\[
 4XY=a^2-(X-Y)^2.
\]
The terms involving $z_*$ have degree $2g$ or odd degree. At the encoded degrees $j=2g+2+2d$ with $d\in\mathcal D$, the inequality $[t^j]XY\ge[t^j]q$ becomes
\[
 \frac{[t^d]S^2-[t^d]Z^2}{4}\ge
 \frac{[t^d]S^2-\beta_d}{4},
\]
which recovers all constraints on the selected coefficients of $Z^2$ from any positive realization of $1/p$ of order $m$.

Conversely, given $Z$ satisfying~\eqref{eq:squareproblem}, set
\begin{align}\label{eq:constructXY}
 X&=\tfrac12t^g\bigl(1+t(S+Z)(t^2)\bigr),\nonumber\\
 Y&=\tfrac12t^g\bigl(1+t(S-Z)(t^2)\bigr).
\end{align}
The polynomials $X,Y$ have nonnegative coefficients and satisfy $X+Y=a$. For each $d\in\mathcal D$, the coefficient of $XY$ at degree $2g+2+2d$ is at least that of $q$, while every other coefficient of $q$ of degree at least $2g$ is zero or negative. Lemma~\ref{lem:compression} therefore gives $d_+(1/p)=m$.
\end{proof}

\subsection{Positive realizability and stability}\label{sec:promise}

The odd coefficients supplied by $E$ ensure that $1/p$ has a positive realization whether or not the constraints in Definition~\ref{def:square} are feasible. A rational scaling then gives a stable transfer function without changing its minimum positive order.

\begin{lemma}\label{lem:existence}
For the polynomial $p$ in~\eqref{eq:denominator}, one can construct a positive rational number $K$ such that
\begin{equation}\label{eq:stable}
 \widehat H(z)=\frac{K^m}{p(Kz)}
\end{equation}
is stable, admits a rational positive realization, and satisfies $\dplus(\widehat H)=\dplus(1/p)$.
\end{lemma}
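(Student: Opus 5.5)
Three things have to be shown: $\dplus$ is unchanged by scaling, $1/p$ has some positive realization, and a suitable $K$ can be written down explicitly.

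\emph{Scaling invariance.} If $(A,b,c)$ is a positive realization of $1/p$ of order $n$, then
\[
 \frac{K^m}{p(Kz)}=K^{m}\,c^\top(KzI-A)^{-1}b=K^{m-1}\,c^\top(zI-A/K)^{-1}b .
\]
So $(A/K,\,b,\,K^{m-1}c)$ is a positive realization of $\widehat H$ of the same order $n$. Running the same map backwards with $K^{-1}$ turns realizations of $\widehat H$ into realizations of $1/p$. Hence $\dplus(\widehat H)=\dplus(1/p)$. The poles of $\widehat H$ are the roots of $p$ divided by $K$. Any $K$ larger than a Cauchy root bound of $p$ therefore gives stability. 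Taking $K=2+\sum_k|q_k|$, which is rational and computable in polynomial time, suffices because every root $\lambda$ of $p$ satisfies $|\lambda|<1+\max_k|q_k|$. Also, $A/K$ is rational whenever $A$ is, so it remains only to exhibit a rational positive realization of $1/p$ of some finite order.

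\emph{Positive realizability of $1/p$ in multiplicative form.} I would write
\[
 q=1-a+b-E=(1-a)\bigl(1-E'\bigr)-R
\]
for suitable $E'$ and a remainder $R$ with nonnegative coefficients. Then $1/q$ is a power series built from inverses of ``one minus a nonnegative series''. Concretely, $1/q=\sum_{r\ge0}(a-b+E)^r$ converges formally, but $a-b+E$ has negative coefficients at even degrees because of $b$. The stated inequality $aE\coeffge ab$ handles this: I would factor
\[
 1-a+b-E=1-\bigl(a+E-b\bigr)
\]
and group the terms as
\[
 1-a-E+b=(1-a)(1-E)-aE+b .
\]
This gives
\[
 \frac1q=\frac{1}{(1-a)(1-E)}\cdot\frac{1}{1-\frac{aE-b}{(1-a)(1-E)}} .
\]
The key point is that $aE-b\coeffge0$ after truncation at degree $m$, which follows from $\gamma>a(1)b(1)$ and the fact that $b$ lives in degrees $\ge 2g+2$.

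\emph{Handling degree overflow and realizing.} The degree overflow of $aE$ beyond $m$ spoils an exact identity, so I would instead work with a positive-system construction directly. Take the companion-like cycle form. The transfer function $1/q(1/z)$, suitably normalized, is realized by a network of nonnegative cycles. Here the sum is $1-\sum(\text{cycle weights})+\sum(\text{disjoint pairs})$, with extra auxiliary states allowed. Each negative term $-a_kt^k$ and $-\gamma t^j$ is realized by a cycle of weight $a_k$ or $\gamma$ along a chain. The positive term $b$ is produced by pairing cycles disjointly, with products $a\cdot E$-type contributions, and the excess is cancelled by additional cycles with nonnegative weights.

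\emph{Main obstacle.} The hardest step is making this bookkeeping exact at all degrees up to and beyond $m$. If the Hessenberg-cycle approach fails, my fallback is the existence criterion for positive realizations of SISO systems with a unique strictly dominant positive real pole and nonnegative impulse response, as in the literature on dominant-pole constructions. This requires checking three things. First, $h_k\ge0$ for all $k$, from the series expansion above, using $aE\coeffge ab$ to dominate each negative contribution of $b$. Second, $p$ has a simple positive root strictly larger in modulus than all others, which holds by Perron–Frobenius-type aperiodicity since $a$ has coprime exponents $g$ and $g+1$. Third, the resulting construction has rational entries, which may require replacing an irrational dominant pole by a rational upper bound in the polyhedral-cone construction.
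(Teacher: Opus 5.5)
Your scaling bijection is correct, and so is your Cauchy-bound choice $K=2+\sum_k|q_k|$, which is a valid substitute for the paper's row-sum bound. The gap is in the one substantive step: a rational positive realization of $1/p$ of some finite order.

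\emph{The key inequality is false.} You claim $aE-b\coeffge0$ after truncation at degree $m$. But $aE$ is supported in degrees at least $g+(2g+1)=3g+1>m$, whereas $b=t^{2g+2}B(t^2)$ is supported in $[2g+2,\,2g+2+4D]$, and $2g+2+4D=16D+10<m$. After truncation at degree $m$ you are left with exactly $-b$, which is negative wherever $B$ is nonzero. For the constructed instances $B\neq0$: the reference constraint in Appendix~\ref{app:encoding} gives $[t^{e_1}]B=1$. The inequality $aE\coeffge ab$ stated before the lemma concerns $ab$, not $b$, and the two lie in disjoint degree ranges. So your factorization of $1/q$ does not produce a nonnegative series.

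\emph{The other routes inherit the problem.} The cycle-network paragraph fails for the same reason. A disjoint pair made of an $a$-cycle and an $E$-cycle has total length at least $3g+1$, so it cannot generate the positive coefficients of $b$. In the fallback, the nonnegativity of $h_k$ rests on the same false inequality. The Perron--Frobenius step needs $q$, or a multiple of it, in the form $1-(\text{nonnegative series})$, which you never obtain. Rationality is left open.

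\emph{The missing idea is to multiply by $1+a$.} One has $(1+a)q=1-\mathcal R$ with $\mathcal R=(a^2-b)+E+(aE-ab)$. The term $b$ is absorbed by $a^2$, because $a^2-b=t^{2g}\bigl[1+2tS(t^2)+t^2(S^2-B)(t^2)\bigr]$ and $B\preceq_{\mathrm{coef}}\tfrac12S^2$ by \eqref{eq:boundpoly}. This is where the lower bound in \eqref{eq:range} is used. The new term $ab$ is then absorbed by $aE$, so $\mathcal R\coeffge0$.

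\emph{The realization follows directly.} Set $s=\deg a$ and $L(z)=z^s(1+a(1/z))$. Then $p(z)L(z)=z^r-\sum_jR_jz^{r-j}$ with $R_j\ge0$ and $r=m+s$. Take the companion matrix with unit superdiagonal and last row $(R_r,\ldots,R_1)$, input $\e_r$, and output vector the coefficients of $L$. It realizes $L/(pL)=1/p$ with nonnegative rational entries at order $r$. The same identity would supply the nonnegative impulse response and the simple dominant positive pole your fallback needs. With it in hand, the direct companion realization makes the fallback unnecessary.
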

\begin{proof}
Set $s=\deg a$, $r=m+s$, and $L(z)=z^s(1+a(1/z))$. We realize $1/p=L/(pL)$ using a nonnegative companion matrix for $pL$. The coefficients of $a$ at degrees $g$ and $g+1$ are one. Since $E$ has coefficient $\gamma$ at every odd degree from $2g+1$ to $m$, the product $aE$ has coefficient at least $\gamma$ at every degree from $3g+1$ to $4g$. All nonzero coefficients of $ab$ occur between $3g+2$ and $3g+6D+3=4g-1$, and each is at most $a(1)b(1)<\gamma$. Hence $aE-ab\coeffge0$. Also,~\eqref{eq:boundpoly} gives
\[
 a^2-b=t^{2g}\bigl[1+2tS(t^2)+t^2(S^2-B)(t^2)\bigr]\coeffge0.
\]
It follows that
\[
 q(1+a)=1-\mathcal R,\qquad
 \mathcal R=(a^2-b)+E+(aE-ab)\coeffge0.
\]

The polynomial $L$ has nonnegative rational coefficients, and
\[
 P(z)=p(z)L(z)=z^r-\sum_{j=1}^rR_jz^{r-j},\qquad R_j\ge0,
\]
where $\mathcal R(t)=\sum_{j=1}^rR_jt^j$. Let $A_\star$ be the row companion matrix with unit superdiagonal, last row $(R_r,\ldots,R_1)$, and all other entries zero. Let $\ell\in\R^r$ contain the coefficients of $L$ in increasing powers of $z$, padded with zeros. The identity
\[
 (zI-A_\star)^{-1}\e_r
 =\frac{(1,z,\ldots,z^{r-1})^\top}{P(z)}
\]
gives $\ell^\top(zI-A_\star)^{-1}\e_r=L/P=1/p$. Thus $(A_\star,\e_r,\ell)$ is a rational positive realization.

Choose $K=2(1+\sum_{j=1}^r R_j)$. Every row sum of $A_\star/K$ is less than $1/2$, so the scaled realization $(A_\star/K,\e_r,K^{m-1}\ell)$ is stable and realizes~\eqref{eq:stable}. More generally, the map
\[
 (A,b,c)\longmapsto(A/K,\,b,\,K^{m-1}c)
\]
is a bijection between positive realizations of $1/p$ and $\widehat H$ that preserves dimension. Their minimum positive orders are therefore equal.
\end{proof}

\section{Proof of Theorem~\ref{thm:main}}\label{sec:bits}

Section~\ref{sec:construction} establishes the equivalence between $\Inv$ feasibility and positive realization at the McMillan degree. To prove Theorem~\ref{thm:main}, we express positive realization within a prescribed order bound as real polynomial feasibility and verify that the construction takes polynomial time.

\subsection{Membership in \texorpdfstring{$\ER$}{the existential real complexity class}}
A finite set of Markov equations suffices to certify exact equality of transfer matrices.
\begin{lemma}\label{lem:membership}
$\PR\in\ER$.
\end{lemma}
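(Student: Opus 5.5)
To show $\PR\in\ER$, we reduce an instance $(H,1^n)$ in polynomial time to a single existential sentence over the reals. The hint preceding the statement points the way: equality of strictly proper transfer matrices is certified by finitely many Markov parameters. Write $H=N/d$, where $d$ is a common monic denominator of all entries, for instance the product of the entry denominators normalized to be monic. Let $N$ be a polynomial matrix with $\deg N<\deg d=:\delta$. The size of $\delta$ is polynomial in the input, and $N,d$ have rational coefficients computable in polynomial time. It suffices to decide whether some $k\le n$ admits a positive realization of order exactly $k$. We can instead fix order exactly $n$: a positive realization of order $k<n$ pads with zero states, setting $A\oplus 0$, $B$ stacked on $0$, and $(C\ 0)$, to a positive realization of order $n$ of the same $H$. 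Thus $\dplus(H)\le n$ iff an order-$n$ positive realization exists.

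The sentence introduces real variables for the entries of $A\in\R^{n\times n}$, $B\in\R^{n\times p}$, and $C\in\R^{q\times n}$. It imposes the linear inequalities $A,B,C\ge0$ together with $CA^kB=H_k$ for $0\le k\le n+\delta-1$. The $H_k$ are rational numbers obtained from $N/d$ by the recursion $d(z)\sum H_kz^{-k-1}=N(z)$, so they can be computed in polynomial time with polynomially bounded bit size.

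For correctness, let $\chi_A$ be the characteristic polynomial of $A$. Then $C(zI-A)^{-1}B=\tilde N/\chi_A$ with $\deg \tilde N<n$. Consider the difference $C(zI-A)^{-1}B-H=(\tilde N d-N\chi_A)/(\chi_A d)$. Its numerator is a polynomial matrix of degree less than $n+\delta$, and its denominator is monic of degree $n+\delta$. Its Laurent expansion is $\sum_k(CA^kB-H_k)z^{-k-1}$. If the first $n+\delta$ coefficients vanish, the expansion is $O(z^{-(n+\delta)-1})$. Multiplying by $\chi_Ad$ then shows that the numerator is $O(z^{-1})$, so the numerator is zero and the identity holds. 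The converse direction is immediate.

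To avoid high-degree polynomials in the sentence, we introduce auxiliary matrix variables $M_0=B$ and $M_{k+1}=AM_k$, and impose $CM_k=H_k$. Every constraint is then quadratic, and the number of constraints is polynomial in $n+\delta$ and in the input size.

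The step that needs care is the encoding-size bound. Because $n$ is given in unary, the number of variables is polynomial. The bound $\delta$ on the number of Markov equations is polynomial, since it is at most the total degree of the entry denominators. The rational constants $H_k$ have polynomial bit length, since they come from a linear recurrence with $\delta$ steps of fixed rational coefficients. We justify this either by the standard Cramer/determinant bound or by introducing the $H_k$ themselves as variables defined by the recurrence equations with the rational coefficients of $N,d$, which sidesteps the bit-growth issue entirely. We take the second route, and this completes the reduction to real polynomial feasibility.
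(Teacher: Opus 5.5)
Your proof is correct and follows essentially the same route as the paper. You pad to order exactly $n$, introduce nonnegative variables with auxiliary matrices $M_k=A^kB$ so that every constraint is quadratic, and justify finitely many Markov equations by the degree count on the numerator of $C(zI-A)^{-1}B-H$. The differences are cosmetic. You use a common monic denominator, giving $n+\delta$ equations instead of $n+d$ with $d$ the maximal entry denominator degree, which also avoids reducing the entries. You also treat the $H_k$ as variables constrained by the recurrence, which sidesteps the bit-length bound that the paper simply asserts.
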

\begin{proof}
Write each entry of $H$ in reduced form and let $d$ be the maximum denominator degree, taking denominator one for zero entries. For $p$ inputs and $q$ outputs, introduce nonnegative variables $A\in\R^{n\times n}$, $C\in\R^{q\times n}$, and $X_0,\ldots,X_{d+n-1}\in\R^{n\times p}$. Use $X_0$ as the input matrix $B$ and $X_k$ to represent $A^kB$. The auxiliary variables let us express the Markov conditions through quadratic equations. Impose
\begin{align}\label{eq:finite}
 X_{k+1}&=AX_k &&\text{for }0\le k<d+n-1,\nonumber\\
 CX_k&=H_k &&\text{for }0\le k<d+n.
\end{align}
The recurrence gives $X_k=A^kB$, so~\eqref{eq:finite} matches the first $d+n$ Markov matrices. Each entry of $H-C(zI-A)^{-1}B$ is therefore $O(z^{-(d+n+1)})$ and has denominator degree at most $d+n$. Multiplying by its denominator gives a polynomial numerator that tends to zero as $z\to\infty$. This polynomial must be identically zero, so the transfer matrices agree exactly. Conversely, any positive realization with at most $n$ states can be padded with zero states to satisfy~\eqref{eq:finite}.

The full coefficient lists and unary encoding of $n$ bound $d$ and $n$ by the input length. Hence~\eqref{eq:finite} contains polynomially many scalar variables and quadratic equations. The required rational Markov parameters are computable from the coefficient recurrence for each entry of $H$ in polynomial time and have polynomial bit length.
\end{proof}

\subsection{Polynomial size and completion of the reduction}\label{sec:mainproof}
\begin{proof}[Proof of Theorem~\ref{thm:main}]
Let $\Phi$ be an $\Inv$ instance with encoding length $N_0$. Lemma~\ref{lem:squarehard} constructs the coefficient data in polynomial time, with $D$ bounded by a polynomial in $N_0$. By~\eqref{eq:sizes}, the integers $g,m$ and the companion dimension $r=m+\deg a$ in Lemma~\ref{lem:existence} are linear in $D$. Thus the unary order bound and all coefficient lists have polynomial length.

The rational coefficients of $S,B$ have polynomial total bit length. The fixed number of polynomial sums and products defining $a,b,\gamma,E,q$ and $\mathcal R$ preserves this bound. The scale $K=2(1+\sum_jR_j)$ therefore has polynomial bit length, as do its powers up to $m$. Consequently, the coefficients of $\widehat H$ are computable in polynomial time with polynomial bit length.

Write $\widehat H_\Phi$ for the transfer function~\eqref{eq:stable} constructed from $\Phi$, and $m_\Phi$ for its McMillan degree. Lemmas~\ref{lem:squarehard}, \ref{thm:encoding}, and~\ref{lem:existence} show that $\Phi$ is feasible if and only if
\begin{equation}\label{eq:finalequiv}
 \dplus(\widehat H_\Phi)\le m_\Phi=\mcm(\widehat H_\Phi).
\end{equation}
Hence $\Phi\mapsto(\widehat H_\Phi,m_\Phi)$ is a polynomial-time reduction from $\Inv$ to $\PR$. Lemma~\ref{lem:existence} ensures that every $\widehat H_\Phi$ is stable and admits a positive realization. Since $\Inv$ is $\ER$-complete, the reduction proves hardness with the restrictions in Theorem~\ref{thm:main}. Lemma~\ref{lem:membership} establishes membership in $\ER$.
\end{proof}

\section{Conclusion}\label{sec:conclusion}
We have established that deciding whether a positive realization exists within a prescribed order bound is complete for the existential theory of the reals. Hardness holds even for stable SISO transfer functions that admit positive realizations, with the bound equal to the McMillan degree. Thus, determining whether nonnegativity requires additional states is already computationally hard, even when stability and positive realizability are guaranteed.

\appendix
\section{Proofs of Lemmas~\ref{lem:hess} and~\ref{lem:compression}}\label{app:structure}

\begin{proof}[Proof of Lemma~\ref{lem:hess}]
By~\eqref{eq:prefix}, the first nonzero Markov parameter is $h_{m-1}=1$.
A nonzero Markov parameter is a sum of weights of positive input--output walks. Thus a shortest positive input--output path has $m-1$ edges. A shortest path repeats no vertex, so in an $m$-state realization it visits all states. Label them in its order. Input support at a later state, or output support at an earlier state, would shorten a positive path. Hence $b=\beta\e_1$ and $c=\gamma\e_m$ with $\beta,\gamma>0$. An edge $j\to i$ with $i\ge j+2$ would likewise create a path of length at most $m-2$. Therefore $A$ is upper Hessenberg, with subdiagonal entries $t_i=A_{i+1,i}>0$.

A walk of length $m-1$ from state one to state $m$ must use the subdiagonal at every step. Equation~\eqref{eq:prefix} gives
\begin{equation}\label{eq:pathprod}
 \beta\gamma\prod_{i=1}^{m-1}t_i=1.
\end{equation}
Define $d_1=\beta$, $d_{i+1}=t_id_i$, and $D=\diag(d_i)$. Then $J=D^{-1}AD$ has unit subdiagonal, $D^{-1}b=\e_1$, and $c^\top D=\e_m^\top$. Since the reduced denominator already has degree $m$, $\det(zI-J)=p$.

Conversely, Cramer's rule and the unit subdiagonal in~\eqref{eq:hess} give
\begin{equation}\label{eq:cofactor}
 \e_m^\top(zI-J)^{-1}\e_1=\frac1{\det(zI-J)}=\frac1p.
\end{equation}
The McMillan degree gives the lower bound $m$, proving minimality.
\end{proof}

Similarity transformations to nonnegative Hessenberg form in the general matrix setting are studied in \cite{GrusslerRantzer2022}. Here the impulse-response condition~\eqref{eq:prefix} determines the state structure and both endpoint vectors.

\begin{proof}[Proof of Lemma~\ref{lem:compression}]

The normalized matrix in~\eqref{eq:hess} is the transpose of the lower Hessenberg form with unit superdiagonal in Torre-Mayo et al.~\cite[Definition~10]{TorreMayo2007}. Its characteristic polynomial admits the expansion over vertex-disjoint directed cycles in \cite[Eq.~(5)]{TorreMayo2007}. Lemma~\ref{lem:hess} shows that the initial zero Markov parameters force every positive realization of order $m$ into this matrix form with input and output vectors $\e_1$ and $\e_m$.

Write $w_{ij}=J_{ij}$ for $1\le i\le j\le m$. The edge $j\to i$, together with the directed path of unit subdiagonal edges, is a directed cycle on the interval $[i,j]$, of length $j-i+1$ and weight $w_{ij}$. Every simple directed cycle has this form. When a cycle is traversed from its smallest vertex, the first backward step must return to that vertex, since returning to any other earlier vertex would repeat a vertex before closing the cycle.

The determinant expansion gives

\begin{equation}\label{eq:interval}
 q(t)=\sum_{\mathcal I}(-1)^{|\mathcal I|}
 t^{\sum_{I\in\mathcal I}|I|}\prod_{I\in\mathcal I}w_I,
\end{equation}

where $\mathcal I$ ranges over families of pairwise vertex-disjoint intervals, including the empty family, and $w_I=w_{ij}$ for $I=[i,j]$.

Torre-Mayo et al.~\cite[Lemma~37]{TorreMayo2007} aggregate weights of cycles of a common length to bound the coefficient associated with pairs of such cycles. Under the size and coefficient assumptions of Lemma~\ref{lem:compression}, the compression applies to several cycle lengths. We adjust the weights of long cycles to compensate for the additional pair products and preserve both the characteristic polynomial and the transfer function $\e_m^\top(zI-J)^{-1}\e_1$.

By Lemma~\ref{lem:hess}, it suffices to prove the equivalent existence statement for a nonnegative upper Hessenberg matrix with unit subdiagonal and reversed characteristic polynomial $q$.
The conditions $q_1=\cdots=q_{g-1}=0$ imply inductively that every cycle of length less than $g$ has zero weight. For $g\le k<2g$, products of two nonzero cycle weights cannot contribute at degree $k$. Hence $q_k$ is minus the total weight of length-$k$ cycles, and that total weight equals $s_k$.

Since $m<3g$, at most two nonzero cycles can be disjoint, and a cycle of length at least $2g$ cannot be disjoint from another nonzero cycle. Hence, for $2g\le j\le m$, the interval expansion gives
\begin{equation}\label{eq:surplus}
 q_j=Q_j-u_j,
\end{equation}
where short cycles have lengths in $[g,2g-1]$, $Q_j$ is the sum of products of weights of disjoint short cycles with total length $j$, and $u_j\ge0$ is the total weight of length-$j$ cycles.

Partition the short intervals according to whether their starting vertex is at most $g$. The first class shares vertex $g$. An interval in the second class starts no later than $m-g+1=2g$ and ends no earlier than $2g$, so this class shares vertex $2g$. Let $X_k,Y_k$ be the total weights of length-$k$ intervals in the first and second classes, respectively. Every disjoint pair consists of one interval from each class and is counted once, so $X_k+Y_k=s_k$ and
\begin{equation}\label{eq:monotonicity}
 q_j\le Q_j\le[t^j]XY\qquad\text{for }2g\le j\le m.
\end{equation}

Conversely, place weight $X_k$ on $[1,k]$ and weight $Y_k$ on $[m-k+1,m]$. A prefix and suffix of lengths $k,\ell$ are disjoint exactly when $k+\ell\le m$. Thus their contribution at each degree $j\le m$ is $[t^j]XY$. For $2g\le j\le m$, add the cycle on $[1,j]$ with weight
\begin{equation}\label{eq:longweight}
 u_j=[t^j]XY-q_j\ge0.
\end{equation}
The correction by a long cycle is illustrated in Fig.~\ref{fig:stateexample}. These long intervals cannot participate in disjoint pairs. The interval expansion therefore matches every coefficient of $q$. Terms of $XY$ above degree $m$ correspond to overlapping intervals and do not contribute. The transfer function $\e_m^\top(zI-J)^{-1}\e_1$ is preserved.
\end{proof}

\section{Proof of Lemma~\ref{lem:squarehard}}\label{app:encoding}
Apply the preprocessing in Section~\ref{sec:source}, obtaining variables $x_0,\ldots,x_{n-1}$ with $n\ge1$. Let $v_i$ represent $x_i/2$ and $v_{n+i}$ its negative for $0\le i<n$. These variables lie in $[-1,1]$, and their linear conditions are
\[
 -v_i\le-1/4,\qquad v_i+v_{n+i}=0\quad\text{for }0\le i<n,
\]
together with $v_i+v_j-v_k=0$ for each addition equation. Replace equalities by two inequalities, combine repeated indices, and remove zero identities. Write the resulting $Q\ge3n$ nonzero rows as $\sum_i a_{ji}v_i\le b_j$, with $0\le j<Q$. Each row has at most three nonzero coefficients, $|a_{ji}|\le2$, and $b_j\in\{0,-1/4\}$.

We choose powers for the variable coefficients and two coefficients of opposite unit signs so that distinct unordered pairs contribute at distinct degrees in $Z^2$. Set $h=2n+2$ and
\[
 e_k=2hk^2+k\quad\text{for }0\le k<h,\qquad U=e_{h-1}.
\]
Equality of two pair sums gives
\[
 2h(i^2+k^2-p^2-q^2)=p+q-i-k.
\]
The right side has absolute value less than $2h$, so both sides vanish. The pairs have equal sums and sums of squares, hence equal products and the same indices. Use $e_0=0$ and $e_1$ for the opposite sign coefficients. Set $f_i=e_{i+2}$ and write $u_i=z_{f_i}$ for $0\le i<2n$.

To encode each nonzero row coefficient $a_{ji}$, place an auxiliary coefficient at $\alpha_{ji}=T_j-f_i$, where
\[
 w=2U+1,\qquad T_j=(Q+j+1)w,\qquad L=2Qw.
\]
Products with $u_i$ contribute at degree $T_j$. The auxiliary positions in row $j$ lie in $[T_j-U,T_j]$. These disjoint intervals lie above $2U$, and products of two auxiliary coefficients have degree greater than $L$. Define $S$ by
\[
 s_{e_k}=1,\qquad s_{\alpha_{ji}}=|a_{ji}|/2,
\]
with all other coefficients zero, and let $D=\max_{a_{ji}\ne0}\alpha_{ji}$. Then $s_0=1$, $s_D>0$, and $0\le s_e\le1$. Impose $|z_e|\le s_e$ throughout. The constraint
\[
 [t^{e_1}]Z^2\le-2
\]
is $2z_{e_0}z_{e_1}\le-2$, so the unit bounds force $z_{e_0}=\varepsilon$ and $z_{e_1}=-\varepsilon$ for some $\varepsilon\in\{-1,1\}$. Set $v_i=\varepsilon u_i$ to correct this common sign.

For $a_{ji}>0$, impose $[t^{\alpha_{ji}+e_1}]Z^2\le-|a_{ji}|$. For $a_{ji}<0$, impose this bound at $\alpha_{ji}+e_0$. Each selected coefficient contains only the indicated product of an auxiliary and a reference coefficient. To see this, its degree lies between $T_j-U$ and $T_j$, above every sum of two exponents from $\{e_0,\ldots,e_{h-1}\}$ and below every sum of two auxiliary exponents. A competing product would satisfy
\[
 e_p+T_k-f_\ell=T_j-f_i+e_\rho,\qquad \rho\in\{0,1\}.
\]
Since $|e_p+f_i-f_\ell-e_\rho|\le2U<w$, we have $k=j$. Uniqueness of pair sums then forces $p=\rho$ and $\ell=i$, because no $f_i$ is a reference position. The imposed bound therefore forces $z_{\alpha_{ji}}=\varepsilon a_{ji}/2$.

At degree $T_j$, the separation isolates row $j$:
\[
 [t^{T_j}]Z^2=2\sum_{i:a_{ji}\ne0}u_i z_{\alpha_{ji}}
 =\sum_i a_{ji}v_i.
\]
Imposing $[t^{T_j}]Z^2\le b_j$ expresses row $j$. Thus $1/4\le v_i\le1$, $v_{n+i}=-v_i$ for $0\le i<n$, and the decoded variables $x_i=2v_i$ satisfy the addition equations.

For each equation $x_ix_j=1$, impose $[t^{f_i+f_j}]Z^2\le1/2$ if $i\ne j$, or $[t^{2f_i}]Z^2\le1/4$ if $i=j$. Also impose $[t^{f_i+f_{n+j}}]Z^2\le-1/2$. These degrees are at most $2U$, and uniqueness of pair sums isolates the required products. The bounds give $u_iu_j\le1/4$ and $u_iu_{n+j}\le-1/4$. Since $u_{n+j}=-u_j$, they force $v_iv_j=1/4$, hence $x_ix_j=1$. Conversely, a solution $x$ gives feasible free coefficients $u=(x,-x)/2$, reference coefficients $z_{e_0}=1$, $z_{e_1}=-1$, and auxiliary coefficients $a_{ji}/2$.

Let $\mathcal D$ contain the imposed degrees, retaining the smallest bound if a degree occurs more than once. Every selected degree is at most $L$. The last row is nonzero, so $D\ge L-U$ and $2D>L$. Hence $\mathcal D\subseteq\{0,\ldots,2D\}$. The reference and auxiliary bounds equal the negatives of the corresponding coefficients of $S^2$. At degree $T_j$, that coefficient is $\sum_i|a_{ji}|\ge1$, while $b_j\in\{0,-1/4\}$. At the inversion degrees, the coefficients of $S^2$ are respectively $2$, $1$, and $2$ for the bounds $1/2$, $1/4$, and $-1/2$. Thus every bound satisfies~\eqref{eq:range}.

Finally, $U<2h^3$ and $D<L=2Q(2U+1)=O(Qh^3)$. Since $Q$ is linear in the number of variables and equations and all coefficients have bounded bit length, the full coefficient list for $S$, the degree set, and the bounds are computable in polynomial time. \qed

\bibliographystyle{IEEEtran}
\bibliography{references}

@book{FarinaRinaldi2000,
  author={Lorenzo Farina and Sergio Rinaldi},
  title={Positive Linear Systems: Theory and Applications},
  publisher={John Wiley \& Sons}, address={New York}, year={2000},
  doi={10.1002/9781118033029}
}

@article{BenvenutiFarina1999,
  author={Luca Benvenuti and Lorenzo Farina},
  title={An example of how positivity may force realizations of `large' dimension},
  journal={Systems \& Control Letters},
  volume={36}, number={4}, pages={261--266}, year={1999},
  doi={10.1016/S0167-6911(98)00098-X}
}

@article{BenvenutiFarina2004,
  author={Luca Benvenuti and Lorenzo Farina},
  title={A tutorial on the positive realization problem},
  journal={IEEE Transactions on Automatic Control},
  volume={49}, number={5}, pages={651--664}, year={2004},
  doi={10.1109/TAC.2004.826715}
}

@article{Benvenuti2022,
  author={Luca Benvenuti},
  title={Minimal positive realizations: A survey},
  journal={Automatica}, volume={143}, pages={110422}, year={2022},
  doi={10.1016/j.automatica.2022.110422}
}

@inproceedings{TaghavianSjolund2025,
  author={Hamed Taghavian and Jens Sj{\"o}lund},
  title={Minimal positive {Markov} realizations},
  booktitle={2025 IEEE 64th Conference on Decision and Control (CDC)},
  pages={858--863}, year={2025}, publisher={IEEE},
  doi={10.1109/CDC57313.2025.11312247}
}

@article{BorobiaCanogar2017,
  author={Alberto Borobia and Roberto Canogar},
  title={The real nonnegative inverse eigenvalue problem is {NP}-hard},
  journal={Linear Algebra and its Applications},
  volume={522}, pages={127--139}, year={2017},
  doi={10.1016/j.laa.2017.02.010}
}

@misc{Shitov2018,
  author={Yaroslav Shitov},
  title={A universality theorem for nonnegative matrix factorizations},
  year={2018},
  note={{arXiv:1606.09068v2}}
}

@misc{Abrahamsen2018,
  author={Mikkel Abrahamsen and Anna Adamaszek and Tillmann Miltzow},
  title={The art gallery problem is {$\exists\mathbb{R}$}-complete},
  year={2018},
  note={{arXiv:1704.06969v4}}
}

@article{GrusslerRantzer2022,
  author={Christian Grussler and Anders Rantzer},
  title={On the similarity to nonnegative and {Metzler Hessenberg} forms},
  journal={Special Matrices}, volume={10}, number={1}, pages={1--8}, year={2022},
  doi={10.1515/spma-2020-0140}
}

@article{TorreMayo2007,
  author={J. Torre-Mayo and M. R. Abril-Raymundo and E. Alarcia-Est{\'e}vez and C. Mariju{\'a}n and M. Pisonero},
  title={The nonnegative inverse eigenvalue problem from the coefficients of the characteristic polynomial. {EBL} digraphs},
  journal={Linear Algebra and its Applications},
  volume={426}, number={2--3}, pages={729--773}, year={2007},
  doi={10.1016/j.laa.2007.06.014}
}

@article{Cortese2025,
  author={Marco Cortese and Tommaso Grigoletto and Francesco Ticozzi and Augusto Ferrante},
  title={Robust, positive, and exact model reduction via monotone matrices},
  journal={IEEE Transactions on Automatic Control},
  volume={70}, number={10}, pages={6907--6912}, year={2025},
  doi={10.1109/TAC.2025.3569190}
}

@article{CzajaJamingMatolcsi2008,
  author={Wojciech Czaja and Philippe Jaming and M{\'a}t{\'e} Matolcsi},
  title={An efficient algorithm for positive realizations},
  journal={Systems \& Control Letters},
  volume={57}, number={5}, pages={436--441}, year={2008},
  doi={10.1016/j.sysconle.2007.11.001}
}

@article{HalmschlagerMatolcsi2005,
  author={Andrea Halmschlager and M{\'a}t{\'e} Matolcsi},
  title={Minimal positive realizations for a class of transfer functions},
  journal={IEEE Transactions on Circuits and Systems II: Express Briefs},
  volume={52}, number={4}, pages={177--180}, year={2005},
  doi={10.1109/TCSII.2004.842420}
}

@article{NagyMatolcsi2003Lowerbound,
  author={B{\'e}la Nagy and M{\'a}t{\'e} Matolcsi},
  title={A lowerbound on the dimension of positive realizations},
  journal={IEEE Transactions on Circuits and Systems I: Fundamental Theory and Applications},
  volume={50}, number={6}, pages={782--784}, year={2003},
  doi={10.1109/TCSI.2003.812609}
}

@article{VanDenHof1997,
  author={van den Hof, J. M.},
  title={Realization of positive linear systems},
  journal={Linear Algebra and its Applications},
  volume={256}, pages={287--308}, year={1997},
  doi={10.1016/S0024-3795(96)00030-4}
}

@article{Vavasis2009,
  author={Stephen A. Vavasis},
  title={On the complexity of nonnegative matrix factorization},
  journal={SIAM Journal on Optimization},
  volume={20}, number={3}, pages={1364--1377}, year={2009},
  doi={10.1137/070709967}
}

@article{Benvenuti2020NIEP,
  author={Luca Benvenuti},
  title={The {NIEP} and the positive realization problem},
  journal={Electronic Journal of Linear Algebra},
  volume={36}, pages={367--384}, year={2020},
  doi={10.13001/ela.2020.5039}
}

@misc{SchaeferCardinalMiltzow2024,
  author={Marcus Schaefer and Jean Cardinal and Tillmann Miltzow},
  title={The Existential Theory of the Reals as a Complexity Class: A Compendium},
  year={2024},
  doi={10.48550/arXiv.2407.18006},
  note={{arXiv:2407.18006}}
}

@article{Benvenuti2013RealPoles,
  author={Luca Benvenuti},
  title={Minimal positive realizations of transfer functions with real poles},
  journal={IEEE Transactions on Automatic Control},
  volume={58}, number={4}, pages={1013--1017}, year={2013},
  doi={10.1109/TAC.2012.2212612}
}

@article{Hadjicostis1999,
  author={Christoforos Hadjicostis},
  title={Bounds on the size of minimal nonnegative realizations for discrete-time {LTI} systems},
  journal={Systems \& Control Letters},
  volume={37}, number={1}, pages={39--43}, year={1999},
  doi={10.1016/S0167-6911(99)00005-5}
}
\end{document}